 \newtheorem{thm}{Theorem}[section]
 \newtheorem{cor}[thm]{Corollary}
 \newtheorem{lem}[thm]{Lemma}
 \newtheorem{prop}[thm]{Proposition}
 \theoremstyle{definition}
 \theoremstyle{remark}
 \newtheorem{rem}[thm]{Remark}
 \numberwithin{equation}{section}
\DeclareMathOperator{\Hom}{Hom} \DeclareMathOperator{\Ext}{Ext}
\DeclareMathOperator{\Spec}{Spec}
\DeclareMathOperator{\depth}{depth} \DeclareMathOperator{\Ht}{ht}
\newcommand{\fm}{\mathfrak{m}}
\newcommand{\fp}{\frak{p}}
\newcommand{\fq}{\frak{q}}
\newcommand{\fn}{\frak{n}}
\begin{document}
%
%
%
%
%
%
%
%
%
\title[A NOTE ON QUASI-GORENSTEIN RINGS]
 {A NOTE ON QUASI-GORENSTEIN RINGS}
\author[S. H. Hassanzadeh]{S. H. Hassanzadeh}

\address{Faculty of Mathematical Sciences and Computer, Tarbiat Moallem University, 599 Taleghani Ave., Tehran 15618, Iran}
\address{Institut de mathematiques, Universite Paris 6, 4, Place
Jussieu, F-75252 Paris Cedex 05, France}

\email{h\_hassanzadeh@tmu.ac.ir}
\author[N. Shirmohammadi]{N. Shirmohammadi}

\address{Faculty of Mathematical Sciences and Computer, Tarbiat Moallem University, 599 Taleghani Ave., Tehran 15618, Iran}

\email{ne.shirmohammadi@gmail.com}
\author[H. Zakeri]{H. Zakeri}

\address{Faculty of Mathematical Sciences and Computer, Tarbiat Moallem University, 599 Taleghani Ave., Tehran 15618, Iran}

\email{zakeri@saba.tmu.ac.ir}

\subjclass{13C40,13D45, 13H10}

\keywords{quasi-Gorenstein ring, Linkage, Local Cohomology}

\date{January 1, 2004}

\begin{abstract}
In this paper, after giving a criterion for a Noetherian local
ring to be quasi-Gorenstein, we obtain some sufficient conditions
for a quasi-Gorenstein ring to be Gorenstein. In the course, we
provide a slight generalization of a theorem of Evans and
Griffith.
\end{abstract}

\maketitle
\section{Introduction}
Let $R$ be a Noetherian local ring with maximal ideal $\fm$ and
 residue field $k$. Throughout,  we use $E_{R}(k)$ to denote the
 injective hull of $k$. Also, for a non-negative integer $i$, we
 use $H_{\fm}^{i}(R)$ to denote the $i$-th local cohomology module
 of $R$ with respect to $\fm$. Following \cite{AG}, we say that $R$ is a
quasi-Gorenstein ring if $H_{\fm}^{dim R}(R)\cong E_{R}(k)$. In
this paper, we investigate quasi-Gorenstein rings in terms of
linkage. By the concept of linkage, we mean the complete
intersection linkage. More precisely, for ideals $I$ and $J$, we
say that $I$ is linked to $J$ (written $I\sim J$) if there is an
$R$-sequence $x_{1},\ldots,x_{n}$ in $I\cap J$ such that
$I=(x_{1},\ldots,x_{n}):J$ and $J=(x_{1},\ldots,x_{n}):I$.

To begin, using Cohen's structure theorem in conjunction with the
linkage theory, we provide a criterion which shows that the ring
$R$ is quasi-Gorenstein if and only if $\widehat{R}$, the
$\fm$-adic completion of $R$, is a certain specialization of a
Gorenstein local ring. This enables us to compare (in 2.4) the
Gorenstein loci and the Cohen--Macaulay loci of a
quasi-Gorenstein ring. The question asking when a
quasi-Gorenstein ring is Gorenstein constitutes a lot of
interests. In this direction, we use the above mentioned
criterion to establish the results 2.5 and 2.7 which provide some
sufficient conditions under which a quasi-Gorenstein ring is
Gorenstein. Finally, using 2.7 and a theorem of Serre, we provide
a slight generalization of a theorem of Evans and Griffith
concerning the problem to know when the residue class ring of a
height two prime ideal of a certain regular local ring is
complete intersection.


\section{The Results}

It is a basic fact that quasi-Gorenstein rings satisfy Serre's
condition $S_{2}$. Although this fact has already been proved,
but, for the reader's convenience, in the following lemma,  we
provide a different proof by using local cohomology tools.
Following \cite{M}, we say that a proper ideal $I$ in a
Noetherian ring is unmixed if the heights of its prime divisors
are all equal.

\begin{lem}
Every quasi-Gorenstein ring satisfies $S_{2}$. In particular, its
zero ideal is unmixed.
\end{lem}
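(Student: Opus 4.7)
The plan is to pass to the $\fm$-adic completion $\widehat{R}$ and apply Matlis duality to the hypothesis, reducing $S_{2}$ to the classical fact that the canonical module satisfies $S_{2}$.

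First I would verify that both the quasi-Gorenstein property and the condition $S_{2}$ are compatible with completion in the directions needed. Since $H^{d}_{\fm}(R)$ is Artinian, it acquires a natural $\widehat{R}$-module structure and coincides with $H^{d}_{\widehat{\fm}}(\widehat{R})$, while $E_{R}(k)=E_{\widehat{R}}(k)$; hence $\widehat{R}$ is quasi-Gorenstein whenever $R$ is. Conversely, $S_{2}$ for $\widehat{R}$ descends to $R$: given $\fp\in\Spec R$, I pick $\fq\in\Spec\widehat{R}$ corresponding to a minimal prime of the formal fibre $\widehat{R}\otimes_{R}\kappa(\fp)$, so that $\widehat{R}_{\fq}/\fp\widehat{R}_{\fq}$ is Artinian; the standard depth and dimension formulae for flat extensions then yield $\depth R_{\fp}=\depth\widehat{R}_{\fq}$ and $\dim R_{\fp}=\dim\widehat{R}_{\fq}$. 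Thus I may assume that $R$ is complete.

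With $R$ complete I apply Matlis duality $(-)^{\vee}:=\Hom_{R}(-,E_{R}(k))$ to the given isomorphism $H^{d}_{\fm}(R)\cong E_{R}(k)$ to obtain
$$\omega_{R}:=H^{d}_{\fm}(R)^{\vee}\cong E_{R}(k)^{\vee}\cong\widehat{R}=R.$$
Here $\omega_{R}$ is, by definition, the canonical module of the complete local ring $R$, and it is a classical fact that the canonical module of any complete local ring satisfies Serre's condition $S_{2}$; this ultimately rests on local duality together with the computation of depth in terms of the vanishing of $\Ext$. Since $R\cong\omega_{R}$, the ring $R$ itself is $S_{2}$.

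The ``in particular'' statement is then immediate: $S_{2}$ implies $S_{1}$, which forbids embedded associated primes, so every prime divisor of $(0)$ is minimal over $(0)$ and has height zero, making the zero ideal unmixed. The main obstacle lies in the descent of $S_{2}$ through the faithfully flat extension $R\to\widehat{R}$; once the choice of a prime $\fq$ with Artinian formal fibre has been made precise, the rest of the argument is a clean application of Matlis duality.
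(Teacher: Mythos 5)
Your proof is correct and takes essentially the same route as the paper: reduce to the complete case, identify $R$ with the Matlis dual of $H_{\fm}^{\dim R}(R)$ (that is, with its canonical module $\omega_R$), and conclude from the $S_2$ property of $\omega_R$. The only difference is one of packaging: you cite the $S_2$-ness of the canonical module as a classical fact, while the paper proves it on the spot via Cohen's structure theorem, local duality and the fact that $\Hom$ into a Cohen--Macaulay quotient $S/(\textit{\textbf{x}})$ satisfies $S_2$ --- which is exactly the standard proof of the fact you invoke.
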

\begin{proof}
Let $(R,\fm,k)$ be a quasi-Gorenstein ring of dimension $n$. To
prove that $R$ satisfies $S_{2}$, without loss of generality, we
can assume that $R$ is complete (cf. \cite[2.1.16]{BH}); so that,
by Cohen's structure theorem, there exists a regular local ring
$S$ of dimension $n'$ such that $R\cong S/I$ for some ideal $I$ of
$S$. Now, by Local Duality Theorem (cf. \cite[11.2.6]{BSh}) and
\cite[1.2.4]{BH}, $R\cong \Hom_{R}(E_{R}(k),E_{R}(k))\cong
\Hom_{R}(H_{\fm}^{n}(R),E_{R}(k))\cong\Ext^{n'-n}_{S}(R,S)\cong
\Hom_{S/(\textit{\textbf{x}})}(R,S/(\textit{\textbf{x}}))$, where
$\textit{\textbf{x}}$ is a maximal $S$-sequence in $I$. Using
this in conjunction with \cite[1.4.19]{BH},  one can deduce that
$R$ satisfies $S_{2}$. Now, for a prime divisor $\fp$ of $R$, we
have $0=\depth R_{\fp} \geq \min (2,\Ht\fp)$; hence the final
statement holds.
\end{proof}

In \cite[5.2]{BE}, Buchsbaum and Eisenbud established relations,
under certain conditions, between almost complete intersection
ideals and Gorenstein ideals. Then, Schenzel, in \cite{Sch1},
improved the above result by establishing a duality between
almost complete intersection ideals and quasi-Gorenstein ideals.
The next  proposition, which is related to the above mentioned
result, shows that one can characterize quasi-Gorenstein rings by
using the concept of linkage.

In the proof of the next proposition, for a local ring $(R,\fm)$,
we use $\widehat{R}$ to denote the $\fm$-adic completion of $R$.

\begin{prop}
Let $S$ be a Gorenstein local ring and $I$ be an ideal of $S$ of
height zero. Then $S/I$ is quasi-Gorenstein if and only if $I\sim
Sx$ for some $x\in S$.
\end{prop}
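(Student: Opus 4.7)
The plan is to translate the quasi-Gorenstein condition into an isomorphism involving $0{:}_S I$ and to read that isomorphism as a linkage statement. Set $n = \dim S$; since $\Ht I = 0$, also $\dim S/I = n$. Because every element of $I$ is a zerodivisor in the Cohen--Macaulay ring $S$, the only $S$-sequence contained in $I$ is the empty one, so any linkage $I \sim J$ collapses to $I = 0{:}_S J$ and $J = 0{:}_S I$; in particular, $I \sim Sx$ means exactly $I = 0{:}_S x$ and $Sx = 0{:}_S I$.

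For $(\Leftarrow)$, if $I \sim Sx$ then the map $S \to Sx$, $s \mapsto sx$, has kernel $0{:}_S x = I$, giving $S/I \cong Sx = 0{:}_S I$. Local duality over the Gorenstein ring $S$ (with $\omega_S = S$) then yields $\Hom_S(H_{\fm}^{n}(S/I), E_S(k)) \cong \Hom_S(S/I, S) = 0{:}_S I \cong S/I$. Applying $\Hom_S(-, E_S(k))$ once more and using that the Artinian module $H_{\fm}^{n}(S/I)$ is reflexive under Matlis duality, I obtain $H_{\fm}^{n}(S/I) \cong \Hom_S(S/I, E_S(k)) = 0{:}_{E_S(k)} I = E_{S/I}(k)$, so $S/I$ is quasi-Gorenstein.

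For $(\Rightarrow)$, assume $S/I$ is quasi-Gorenstein. Local duality gives $0{:}_S I \cong \Hom_S(H_{\fm}^{n}(S/I), E_S(k)) \cong \Hom_S(E_{S/I}(k), E_S(k))$. Every $S$-linear map $E_{S/I}(k) \to E_S(k)$ factors through $0{:}_{E_S(k)} I = E_{S/I}(k)$, so the right side equals $\Hom_{S/I}(E_{S/I}(k), E_{S/I}(k)) \cong \widehat{S/I}$; hence $0{:}_S I \cong \widehat{S/I}$ as $S$-modules. Completing and using that colon commutes with flat base change, this becomes a cyclic isomorphism $0{:}_{\widehat S} \widehat I \cong \widehat S / \widehat I$ over $\widehat S$. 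Nakayama's lemma then forces $0{:}_S I$ itself to be cyclic, say $0{:}_S I = Sy$; the generator $\widehat y$ inherits $0{:}_{\widehat S} \widehat y = \widehat I$ from the cyclic isomorphism, and faithful flatness of $\widehat S$ over $S$ descends this to $0{:}_S y = I$. The pair $I = 0{:}_S y$ and $Sy = 0{:}_S I$ is precisely $I \sim Sy$.

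The main obstacle I anticipate is this last descent. Local duality together with the Matlis identification $\Hom_{S/I}(E_{S/I}(k), E_{S/I}(k)) \cong \widehat{S/I}$ naturally produce data over $\widehat S$, whereas the conclusion demands a linking element $x$ in $S$ itself. Two ingredients must be assembled carefully: cyclicity transfers from $\widehat S$ to $S$ via Nakayama, and colon equalities transfer via faithful flatness.
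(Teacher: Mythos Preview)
Your argument is correct and close in spirit to the paper's, but the forward direction is organized differently. The paper first invokes Lemma~2.1 to see that $I$ is unmixed and then cites Schenzel's linkage result \cite[2.2]{Sch2} to obtain $I\sim(0:_{S}I)$ outright; only afterwards does it pass to the completion and use local duality to show that $(0:_{S}I)\widehat S\cong \widehat S/I\widehat S$ is cyclic, whence $(0:_{S}I)$ is principal. You bypass both the unmixedness lemma and the Schenzel citation: from the module isomorphism $0:_{\widehat S}\widehat I\cong \widehat S/\widehat I$ you read off not only cyclicity (via Nakayama) but also the equality $0:_{\widehat S}\widehat y=\widehat I$ as an \emph{annihilator} identity, and then descend by faithful flatness. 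This is a legitimate and somewhat more self-contained route; the paper's route, on the other hand, isolates the step $I=0:_{S}(0:_{S}I)$ as a general linkage fact valid for any unmixed height-zero ideal in a Gorenstein ring, independent of the quasi-Gorenstein hypothesis.

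One small technical remark: over a not-necessarily-complete Gorenstein ring $S$, local duality in the form you invoke gives $\Hom_{S}\bigl(H^{n}_{\fm}(S/I),E_{S}(k)\bigr)\cong \widehat{\Hom_{S}(S/I,S)}$ rather than $\Hom_{S}(S/I,S)$ itself. This does not affect your conclusions, since in $(\Leftarrow)$ you immediately dualize back (and Matlis dual is insensitive to completion of finitely generated modules), and in $(\Rightarrow)$ you explicitly pass to $\widehat S$ anyway; but it is worth stating the isomorphism in the correct direction, namely $H^{n}_{\fm}(S/I)\cong \Hom_{S}\bigl(\Hom_{S}(S/I,S),E_{S}(k)\bigr)$, which holds without completeness.
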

\begin{proof}
$(\Leftarrow)$ Let $R=S/I$ and assume that $\fm$ is the maximal
ideal of $R$ and that $n=\dim R$. Then $R=S/0:_{S}x\cong
Sx=0:_{S}I\cong \Hom_{S}(R,S)$; so that, by Local Duality Theorem,
$E_{R}(R/\fm)=\Hom_{R}(R,E_{R}(R/\fm))\cong
\Hom_{R}(\Hom_{S}(R,S),E_{R}(R/\fm))\cong H_{\fm}^{n}(R)$.
Therefore $R$ is quasi-Gorenstein. \\
$(\Rightarrow)$ Since, by 2.1, $I$ is unmixed, it follows from
\cite[2.2]{Sch2} that $I\sim(0:_{S}I)$. Therefore we have only to
prove that $(0:_{S}I)$ is a principal ideal. Using the
hypothesis, we easily see that $\widehat{S}/I\widehat{S}$ is also
a quasi-Gorenstein ring. Thus, applying $- \otimes_S \widehat{S}$
to the relation $I\sim(0:_{S}I)$, one obtains the relation
$I\widehat{S}\sim(0:_{\widehat{S}}I\widehat{S})$. Therefore, by
the same arguments as in the proof of 2.1, we have
$\widehat{S}/I\widehat{S}\cong
\Hom_{\widehat{S}}(\widehat{S}/I\widehat{S},\widehat{S})\cong
0:_{\widehat{S}}I\widehat{S}=(0:_{S}I)\widehat{S}.$ Hence, by
\cite[Exercise 8.3]{M}, $(0:_{S}I)$ is a principal ideal.
\end{proof}

\begin{rem}
It is straightforward to see that a Noetherian local ring
$(R,\fm)$ is quasi-Gorenstein if and only if $\widehat{R}$ is so.
On the other hand, according to Cohen's structure theorem,
$\widehat{R}$ can be described as a residue class ring of a
Gorenstein local ring with the same dimension. Therefore, 2.2
provides a criterion to justify whether a given Noetherian local
ring is quasi-Gorenstein.
\end{rem}

The next corollary shows that, for a quasi-Gorenstein ring $R$,
the Gorenstein loci ($Gor(R)$) and the Cohen--Macaulay loci
($CM(R)$) coincide.

In the course of this paper, for a finitely generated $R$-module
$M$, $r(M)$ and $\mu (M)$ will denote the type of $M$ and the
minimal number of generators of $M$, respectively.

\begin{cor}
$Gor(R)=CM(R)$ whenever $R$ is a quasi-Gorenstein ring.
\end{cor}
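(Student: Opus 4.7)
My plan is to establish the non-trivial inclusion $CM(R) \subseteq Gor(R)$; the other inclusion is immediate because a Gorenstein local ring is Cohen--Macaulay, and both properties localize. So I fix $\fp \in CM(R)$ and aim to show $R_\fp$ is Gorenstein.

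The first step is to reduce to the case where $R$ is complete. By Remark 2.3, $\widehat{R}$ is again quasi-Gorenstein, and the Cohen--Macaulay and Gorenstein properties of $R_\fp$ can be tracked through a suitable prime of $\widehat{R}$ by faithful flatness of completion. Assuming $R$ complete, Cohen's structure theorem together with Remark 2.3 presents $R$ as $S/I$ where $S$ is a Gorenstein local ring with $\dim S = \dim R$; in particular $\Ht_S I = 0$. Proposition 2.2 then produces an $x \in S$ with $I \sim Sx$, and the argument of that proposition furnishes an isomorphism $R \cong \Hom_S(R,S)$.

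Next, letting $\mathfrak{P} \subset S$ denote the preimage of $\fp$, I would localize this isomorphism at $\mathfrak{P}$ (which is legal since $R$ is finitely generated over $S$) to obtain
\[
R_\fp \;\cong\; \Hom_{S_\mathfrak{P}}(R_\fp,\, S_\mathfrak{P}),
\]
with $S_\mathfrak{P}$ Gorenstein. Lemma 2.1 tells us that $I$ is unmixed and that every associated prime of $I$ in $S$ has height $0$; since $\mathfrak{P} \supseteq I$, it must contain at least one such prime, so $\Ht(IS_\mathfrak{P}) = 0$. Because $S_\mathfrak{P}$ is catenary and Cohen--Macaulay, this in turn forces $\dim R_\fp = \dim S_\mathfrak{P}$. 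Hence $R_\fp$ is a Cohen--Macaulay module over the Gorenstein ring $S_\mathfrak{P}$ of the same dimension as $S_\mathfrak{P}$, so its canonical module is $\Ext^0_{S_\mathfrak{P}}(R_\fp, S_\mathfrak{P}) = \Hom_{S_\mathfrak{P}}(R_\fp, S_\mathfrak{P})$, which the displayed isomorphism identifies with $R_\fp$ itself. A Cohen--Macaulay local ring whose canonical module is isomorphic to itself is Gorenstein, which finishes the proof.

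I expect the main obstacle to be the dimension bookkeeping around $\Ht(IS_\mathfrak{P})$: only by exploiting the unmixedness from Lemma 2.1 together with the catenarity of the Gorenstein $S$ can one guarantee that the codimension of $R_\fp$ inside $S_\mathfrak{P}$ drops to zero, so that the $\Hom$ above is genuinely the canonical module of $R_\fp$ and not some higher $\Ext$. The reduction to complete $R$ is a routine but not entirely automatic check that also deserves care.
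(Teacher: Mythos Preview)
Your argument is correct and follows essentially the same route as the paper: reduce to the complete case, write $R=S/I$ with $S$ Gorenstein and $\Ht I=0$, invoke Proposition~2.2, localize at the preimage of $\fp$, and identify the canonical module of $R_\fp$. The only cosmetic difference is the endgame: the paper observes that the canonical module is $(Sx)_{\fp'}$, hence cyclic, so $r(R_\fp)=\mu((Sx)_{\fp'})=1$, whereas you use the equivalent fact that the canonical module is isomorphic to $R_\fp$ itself.
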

\begin{proof}
For a prime ideal $\fp$ of $R$, one can use \cite[Theorem 23.2(i)
]{M} to choose a minimal prime $\fq$ of the ideal
$\fp\widehat{R}$ such that $\fq\cap R=\fp$. Then we notice that
the ring $\widehat{R}_{\fq}/\fp\widehat{R}_{\fq}$ is of dimension
0; so that it is Cohen--Macaulay. This observation in conjunction
with \cite[2.1.7 and 3.3.15]{BH} enables us to assume that $R$ is
complete. Hence, there are a Gorenstein local ring $S$ and a zero
height ideal $I$ of $S$ such that $R\cong S/I$. Therefore, $I\sim
Sx$ for some $x\in S$ by 2.2 . Now, in order to prove the
assertion, it is enough to show that $CM(R)\subseteq Gor(R)$. To
this end, let $\fp\in CM(R)$ and suppose $\fp'\in\Spec(S)$ is
such that $\fp'\supseteq I$ and that $\fp$ is the image of $\fp'$
under the natural homomorphism $S\longrightarrow R$. Then,
$R_{\fp}\cong S_{\fp'}/IS_{\fp'}$ possesses the canonical module
$\Hom_{S_{\fp'}}(R_{\fp},S_{\fp'})$ which is isomorphic to
$0:_{S_{\fp'}}IS_{\fp'}=(Sx)_{\fp'}$. Hence, by \cite[3.3.11]{BH},
$r(R_{\fp})=\mu((Sx)_{\fp'})=1$. Therefore, $R_{\fp}$ is a
Cohen--Macaulay ring of type 1; so that $R_{\fp}$ is Gorenstein.
\end{proof}

M. Hermann and N. V. Trung, in \cite{HT}, present a Buchsbaum
quasi-Gorenstein ring $(R,\fm,k)$ with $\dim R=3$ and
$H_{\fm}^{2}(R)\cong k\oplus k$ which is not Gorenstein. This
example motives us to the following theorem.

\begin{thm}
Let $(R,\fm)$ be a quasi-Gorenstein ring of dimension $n$ and
suppose that $H_{\fm}^{i}(R)=0$ for all integer $i$ with $n/2< i <
n$. Then $R$ is Gorenstein.
\end{thm}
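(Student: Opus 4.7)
My plan is to reduce to a concrete complete setting via Proposition 2.2 and then deploy a biduality spectral sequence over the Gorenstein ambient ring to force the middle-range Ext modules to vanish. Since quasi-Gorensteinness, local cohomology, and Cohen--Macaulayness are preserved under $\fm$-adic completion, I may assume $R$ is complete. By Cohen's structure theorem and Proposition 2.2, I can then write $R=S/I$ where $(S,\fm)$ is a Gorenstein local ring with $\dim S=\dim R=n$, $I$ has height zero, and $I\sim Sx$ for some $x\in S$; the proof of 2.2 also furnishes the key self-duality $\Hom_S(R,S)\cong R$.

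Local duality over the Gorenstein ring $S$ identifies $\Ext_S^{n-i}(R,S)$ with the Matlis dual of $H_{\fm}^i(R)$, so the data translate as follows: $\Ext_S^0(R,S)\cong R$; $\Ext_S^j(R,S)=0$ for $0<j<n/2$; and $\Ext_S^n(R,S)=0$ (by Lemma 2.1 $R$ is $S_2$, so for $n\ge 2$ we have $\depth R\ge 2$ and hence $H_{\fm}^0(R)=0$, while the cases $n\le 1$ are trivial). The goal is to conclude $\Ext_S^j(R,S)=0$ for every $0<j<n$, for this renders $R$ Cohen--Macaulay and, being quasi-Gorenstein, Gorenstein.

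The key tool is the biduality spectral sequence
$$E_2^{p,q}=\Ext_S^p\bigl(\Ext_S^{-q}(R,S),\,S\bigr)\Longrightarrow R\qquad(\text{concentrated in total degree }0),$$
available because $S$ is Gorenstein of finite Krull dimension $n$. The only potentially nonzero columns sit at $-q=0$ and at $-q=j$ with $j\in[\lceil n/2\rceil,\,n-1]$. Fix such a $j$ and examine the spot $(j,0)$: every incoming $d_r$ originates at $(j-r,\,r-1)$, in a row $q=r-1\ge 1$ which is vacant because $\Ext_S^{-q}(R,S)=0$ for $q>0$; every outgoing $d_r$ lands at $(j+r,\,1-r)$, and for the target column $-q=r-1$ to be potentially nonzero one needs $r-1\ge\lceil n/2\rceil$, forcing $j+r\ge 2\lceil n/2\rceil+1>n$, but that row vanishes because $\Ext_S^{>n}(-,S)=0$ (as $S$ has injective dimension $n$). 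Hence $E_\infty^{j,0}=\Ext_S^j(R,S)$, which must be zero since the abutment has no piece in positive total degree. I expect the main technical point to be the Grothendieck biduality over the Gorenstein ring $S$ that underlies the convergence of the spectral sequence; granted this, the degree count above is the elementary heart of the proof.
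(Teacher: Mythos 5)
Your proposal is correct, but it follows a genuinely different route from the paper's. The paper, after the same reduction to a complete quotient $R\cong S/I$ of a Gorenstein local ring with $I\sim Sx$, quotes Schenzel's linkage--duality theorem \cite[4.1]{Sch2} to convert the hypothesis $H_{\fm}^{i}(R)=0$ for $n/2<i<n$ into the Serre condition $S_{n-[\frac{n}{2}]}$ for the linked quotient $S/Sx$, and then extracts the complementary vanishing $H_{\fm}^{i}(R)=0$ for $i<n-[\frac{n}{2}]+1$ from the long exact sequence of local cohomology attached to $0\to S/I\stackrel{x}{\to}S\to S/Sx\to 0$; the two ranges overlap, so $R$ is Cohen--Macaulay and Corollary 2.4 finishes. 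You instead dualize everything over $S$ at once: local duality turns the hypothesis into $\Ext^{j}_{S}(R,S)=0$ for $0<j<n/2$, the self-duality $\Hom_{S}(R,S)\cong R$ puts $R$ itself in the $q=0$ row of the biduality spectral sequence, and the degree count $2\lceil n/2\rceil+1>n$ kills every differential touching the spots $(j,0)$ --- the same numerical pinch as the paper's $[\frac{n}{2}]+(n-[\frac{n}{2}])=n$. In effect you have inlined the duality argument that underlies Schenzel's theorem rather than citing it. Your route costs the machinery of Grothendieck biduality over a Gorenstein ring (a standard but heavier tool than anything the paper invokes), and buys independence from \cite{Sch2} and indeed from the linkage statement of Proposition 2.2: all you actually use is $\Hom_{S}(R,S)\cong R$, which already follows from local duality applied to $H_{\fm}^{n}(R)\cong E_{R}(k)$ without knowing that $0:_{S}I$ is principal. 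The degree bookkeeping checks out (incoming differentials die because the inner $\Ext$ vanishes in negative degrees, outgoing ones because either $0<r-1<n/2$ or $j+r>n$), and the low-dimensional cases and the vanishing of $\Ext_{S}^{n}(R,S)$ via $S_{2}$ are handled correctly; the only blemishes are a couple of interchanged uses of ``row'' and ``column,'' which do not affect the argument. Both proofs conclude identically via Corollary 2.4.
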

\begin{proof}
If $n\leq 2$, then, by 2.1, $R$ is Cohen--Macaulay; hence it is
Gorenstein in view of 2.4. Thus, we may suppose that $n\geq 3$.
Since $\widehat{R}$ is faithfully flat over $R$, we have
$H_{\fm\widehat{R}}^{i}(\widehat{R})=0$ if and only if
$H_{\fm}^{i}(R)=0$. Therefore, in view of 2.3, we may further
assume that there are a Gorenstein local ring $(S,\fn)$ and a
zero height ideal $I$ of $S$ such that $R\cong S/I$. By 2.2, $I$
is linked to an ideal $Sx$ for some $x\in S$. Therefore, the
assumption $H_{\fn}^{i}(S/I)=0$ for all $n-(n-[\frac{n}{2}])< i <
n$ in conjunction with \cite[4.1]{Sch2} implies that $S/Sx$
satisfies Serre's condition $S_{n-[\frac{n}{2}]}$. Now, the
application of local cohomology with respect to $\fn$ to the
exact sequence $0\longrightarrow S/I\stackrel{x}{\longrightarrow}
S\longrightarrow S/Sx\longrightarrow 0$ shows that
$H_{\fn}^{i}(S/I)=0$ for all $i < n-[\frac{n}{2}]+1$. Therefore,
$H_{\fn}^{i}(S/I)=0$ for all $i < n$. Hence $R$ is
Cohen--Macaulay; so that, in view of 2.4, it is Gorenstein.
\end{proof}

\begin{rem}
The conclusion of the above theorem fails if one of the vanishing
conditions $H_{\fm}^{i}(R)=0$ $(n/2< i < n)$ is dropped. For
example, by \cite[2.2]{HT}, for given integers $n\geq 5$ and
$[\frac{n}{2}]< j < n$, there exists a non-Gorenstein
quasi-Gorenstein ring $R$ of dimension $n$ such that
$H_{\fm}^{i}(R)=0$ for all integers $i$ with $n/2< i < n$ and
$i\neq j$.
\end{rem}

An ideal $I$ in a local Cohen--Macaulay ring is called an almost
complete intersection ideal if $\mu (I)=\Ht I+1$. Evans and
Griffith, in \cite[2.1]{EG}, proved, over a certain regular local
ring, that an unmixed almost complete intersection ideal of
height two is  Cohen--Macaulay. In this manner we prove the
following.

\begin{thm}
Let $S$ be a local ring and suppose that $I$ is an ideal of $S$
such that $S/I$ is quasi-Gorenstein and that one of the following
two conditions holds.
\begin{enumerate}
  \item[(i)]$S$ is Gorenstein  and $I$ is almost complete
  intersection.
  \item[(ii)]$S$ is regular containing a field and $I$ is of height
  two.
\end{enumerate}
 Then
$S/I$ is Gorenstein.
\end{thm}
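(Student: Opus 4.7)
The strategy is to handle (i) and (ii) in parallel via a common framework: reduce $I$ to a height-zero ideal by modding out a maximal $S$-sequence in $I$, invoke 2.2 on the reduction to identify a linkage partner, and finally apply 2.4 to upgrade Cohen--Macaulayness to Gorensteinness.

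For (i), the almost complete intersection hypothesis gives $I=(x_1,\ldots,x_g,z)$ with $g=\Ht I$ and $x_1,\ldots,x_g$ an $S$-sequence. Passing to $\bar S = S/(x_1,\ldots,x_g)$, the image $\bar z$ generates a principal height-zero ideal and $\bar S/(\bar z) \cong S/I$ is quasi-Gorenstein. Assuming $\bar z \neq 0$ (else $I$ is already a complete intersection and $S/I$ is Gorenstein), 2.2 produces $w\in\bar S$ with $(w)=0:_{\bar S}\bar z$ and $(\bar z)=0:_{\bar S}w$, so multiplication by $\bar z$ yields a short exact sequence
\[
0 \to \bar S/(w) \to \bar S \to \bar S/(\bar z) \to 0.
\]
Since $\bar S$ is Gorenstein, $H^i_{\bar\fn}(\bar S)=0$ for $i<n:=\dim\bar S$, and the associated long exact sequence gives a shift $H^i_{\bar\fn}(\bar S/(\bar z)) \cong H^{i+1}_{\bar\fn}(\bar S/(w))$ valid for $i\leq n-2$. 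The symmetric sequence obtained by interchanging $\bar z$ and $w$ yields the reverse shift, and composing the two gives the periodicity
\[
H^i_{\bar\fn}(\bar S/(\bar z)) \cong H^{i+2}_{\bar\fn}(\bar S/(\bar z)) \quad \text{for } i \leq n-3.
\]
Lemma 2.1 forces $H^0_{\bar\fn}(\bar S/(\bar z)) = H^1_{\bar\fn}(\bar S/(\bar z)) = 0$, and the periodicity propagates to kill $H^i_{\bar\fn}(\bar S/(\bar z))$ for every $i<n$. Hence $S/I$ is Cohen--Macaulay, and 2.4 finishes (i); the cases $n \leq 2$ follow directly from 2.1.

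For (ii), choose an $S$-sequence $x_1,x_2\in I$ and pass to $\bar S = S/(x_1,x_2)$, a Gorenstein ring in which $I\bar S$ is a height-zero ideal with $\bar S/I\bar S \cong S/I$ quasi-Gorenstein. By 2.2, there exists $z\in S$ with $I\bar S \sim (\bar z)\bar S$, which lifts to the linkage $I \sim J$ in $S$ via the complete intersection $(x_1,x_2)$, where $J:=(x_1,x_2,z)$. Then $\Ht J = 2$, $\mu(J)\leq 3$, and $J$ is unmixed (linkage to the unmixed ideal $I$ of 2.1 preserves unmixedness in a Gorenstein ring). If $\mu(J)=2$ then $J$ is a complete intersection and $S/J$ is Cohen--Macaulay; otherwise $J$ is an unmixed height-two almost complete intersection, and the cited Evans--Griffith theorem applies (here $S$ is regular containing a field) to give $S/J$ Cohen--Macaulay. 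The Schenzel linkage machinery \cite[4.1]{Sch2}---already exploited in the proof of 2.5---then transfers Cohen--Macaulayness from $S/J$ to $S/I$, and 2.4 completes (ii).

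The main obstacle I anticipate is the careful bookkeeping of index ranges in the periodicity argument of (i): the periodicity is empty for small $n$ and one must chain the two alternating shifts far enough to reach $H^{n-1}_{\bar\fn}$, which is why the $n\leq 2$ cases have to be split off and handled by 2.1 alone. For (ii), the subtleties are the verification that linkage transports the unmixedness of $I$ to $J$ (so that Evans--Griffith is applicable) and the precise extraction, from \cite{Sch2}, of vanishing of $H^i_{\fn}(S/I)$ throughout the range $i<\dim S/I$.
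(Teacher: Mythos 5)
Your proof is correct and takes essentially the same route as the paper: in both cases one reduces to a height-zero ideal modulo a maximal regular sequence, applies Proposition 2.2 to obtain the principal linkage partner, and concludes with Corollary 2.4 (using Evans--Griffith together with the Peskine--Szpiro/Schenzel transfer of Cohen--Macaulayness under liaison for (ii)). The only variation is in (i), where the paper deduces Cohen--Macaulayness by observing that the periodic exact sequence $0\to S/I\to S\to S\to\cdots$ exhibits $S/I$ as an $i$-th syzygy for every $i\geq 0$, whereas you reach the same conclusion by the equivalent local-cohomology chase through the two linked short exact sequences.
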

\begin{proof}
Assume (i) holds. Considering a maximal $S$-sequence which
constitutes a part of a minimal generating set of $I$ and passing
to the residue class ring, we can assume that $I=Sy$ for some
$y\in S$ and that the height of $I$ is zero. Then, by 2.2, $I\sim
Sx$ for some $x\in S$. Therefore, we have an exact sequence
$0\longrightarrow S/I\stackrel{x}{\longrightarrow} S
\stackrel{y}{\longrightarrow}
S\stackrel{x}{\longrightarrow}S\longrightarrow\cdots$, which
implies that $S/I$ is an $i$-th module of syzygy for all $i\geq
0$. Thus $S/I$ is Cohen--Macaulay; so that it is Gorenstein by 2.4.\\
Now, assume (ii) holds. One may consider a maximal $S$-sequence
in $I$, pass to the residue class ring and use 2.2 to construct
an ideal $J$ of height 2 which is generated by three elements such
that $I\sim J$. It now follows from \cite[2.1]{EG} that $S/J$ is
Cohen--Macaulay. Hence, by \cite[2.1]{PS}, $S/I$ is
Cohen--Macaulay. Therefore, by 2.4, $S/I$ is Gorenstein.
\end{proof}

\begin{rem}
In \cite[1.1]{K}, Kunz, essentially, proved that an almost
complete intersection ring is not quasi-Gorenstein. Then, as a
corollary, he pointed out that an almost complete intersection
ring is not Gorenstein. Conversely, one can employ 2.7(i), in the
case where $S$ is regular, to show that the above mentioned
corollary would imply \cite[1.1]{K}.
\end{rem}

As an application of the syzygy theorem, Evans and Griffith, in
\cite[2.2]{EG}, proved, over a regular local ring $R$ which
contains a field, that the residue class ring of a height two
prime ideal $\fp$ is complete intersection whenever
$\Ext^{2}_{R}(R/\fp,R)$ is principal. (Note that this last
condition is sufficient for $R/\fp$ to be quasi-Gorenstein.) The
following corollary is a slight generalization of this result.

The result \cite[Proposition 5]{S} of Serre, which indicates that
a height two Gorenstein ideal of a regular local ring is complete
intersection, in conjunction with 2.7(ii) leads immediately us to
the following corollary.

\begin{cor}
Let $S$ be a regular local ring containing a field and suppose
that $I$ is an ideal of height two such that $S/I$ is
quasi-Gorenstein. Then $I$ is complete intersection.
\end{cor}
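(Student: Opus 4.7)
The plan is to read off the conclusion directly from two results already in hand: Theorem 2.7(ii) of the present paper and Proposition 5 of Serre \cite{S}. No further linkage, local cohomology, or syzygy analysis needs to be carried out here once these two tools are combined.

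First, I would verify that the hypotheses of 2.7(ii) are satisfied. By assumption, $S$ is a regular local ring containing a field, $I$ has height two, and $S/I$ is quasi-Gorenstein. These are precisely the hypotheses of case (ii) of Theorem 2.7, so that theorem forces $S/I$ to be Gorenstein. Equivalently, $I$ is a height two Gorenstein ideal of the regular local ring $S$.

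Second, I would invoke Serre's Proposition 5 of \cite{S}, which states that every height two Gorenstein ideal in a regular local ring is a complete intersection. Applied to $I$, this yields the desired conclusion.

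The only thing resembling an obstacle has already been absorbed into the proof of 2.7(ii): the passage from quasi-Gorenstein to Gorenstein in the height two case required the linkage reduction (via 2.2) to an ideal generated by three elements, the Evans--Griffith result \cite[2.1]{EG} to conclude Cohen--Macaulayness, and Corollary 2.4 to upgrade Cohen--Macaulay to Gorenstein. Once that bridge has been crossed, Serre's result closes the remaining gap between Gorenstein and complete intersection in a single step, and the corollary follows immediately.
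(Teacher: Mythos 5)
Your proposal is correct and follows exactly the paper's own argument: the authors likewise obtain the corollary by combining Theorem 2.7(ii) (to pass from quasi-Gorenstein to Gorenstein) with Serre's Proposition 5 (to pass from height two Gorenstein to complete intersection). Nothing further is needed.
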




\subsection*{Acknowledgment}
The authors would like to thank Professor M. Chardin for helpful
suggestion about Theorem 2.5. Also, the second author would like
to thank Professor D. Eisenbud for kind consideration to his
question about unmixed almost
complete intersection ideals.

\begin{thebibliography}{1}
\bibitem{AG} Y. Aoyama,  S. Goto, \textit{On the endomorphism ring of the canonical module},
J. Math. Kyoto Univ. \textbf{25}(1985) 21--30.

\bibitem{BSh} M. Brodmann,  R.Y. Sharp, \textit{Local Cohomology, An Algebric Introduction with Geometric
Application}, Cambridge University Press, Cambridge, 1998.

\bibitem{BH} W. Bruns,  J. Herzog, \textit{Cohen--Macaulay Rings}, revised version, Cambridge University
Press, Cambridge, 1998.

\bibitem{BE} D. A. Buchsbaum,  D. Eisenbud, \textit{Algebra structures for resolutions and some
 structure theorems for ideals of codimension 3},
Amer. J. Math. \textbf{99}(1977) 447--485.


\bibitem{EG} E. G. Evans,  P. Griffith, \textit{The syzygy problem},
Ann. of Math. \textbf{114}(1981) 323--333.

\bibitem{HT} M. Hermann,  N. V. Trung, \textit{Examples of Buchsbaum quasi-Gorenstein rings},
Proc. Amer. Math. Soc. V. 117, No. 3 (1993), 619--625.

\bibitem{K} E. Kunz, \textit{Almost complete intersections are not Gorenstein},
J. Alg. \textbf{28}(1974), 111--115.

\bibitem{M} H. Matsumura, \textit{Commutative ring theory},
Cambridge University Press, Cambridge, 1986.

\bibitem{PS} C. Peskine,  L. Szpiro, \textit{Liaison des varieties algebriques}, Invent. Math, \textbf{26}(1974),
271--302.

\bibitem{Sch1} P. Schenzel, \textit{A note on almost complete intersections},
Seminar Eisenbud-Singh-Vogel. Vol 2, Teubner-Texte Math., vol.
48, Teubner, Leipzig, 1982, pp. 49--54.

\bibitem{Sch2} P. Schenzel, \textit{Notes on Liaison and Duality},
J. Math. Kyoto Univ. \textbf{22}(1982), 485--498.

\bibitem{S} J. P. Serre, \textit{Sur les modules projectifs},
dans S\'{e}minaire Dubreil-Pisot (Alg\`{e}bre et th\'{e}orie des
nombres) 1960/1961, n$^{\circ}$2.

\end{thebibliography}
\end{document}